\title{Partitions of trees and $\aca^\prime$}
\author{Bernard A. Anderson\\ Jeffry L. Hirst\\ Appalachian State University}
\theoremstyle{plain}
\newtheorem{thm}{Theorem}
\newtheorem{lemma}[thm]{Lemma}
\newcommand{\nat}{\mathbb N}  
\newcommand{\aca}{{\sf{ACA}}_0}
\newcommand{\acap}{\aca^\prime}
\newcommand{\rca}{{\sf{RCA}}_0}
\newcommand{\halt}[2]{\Phi^X_{#1,#2}(m)\hskip -.3em\downarrow}
\newcommand{\tre}{2^{<\nat}}
\newcommand{\cat}{^\smallfrown}
\newcommand{\ctt}{{\sf{TT}}}
\newcommand{\lh}{\text{lh}}
\date{ January 27, 2009} 
\begin{document}

\maketitle

\begin{abstract}
We show that a version of Ramsey's theorem for trees for arbitrary exponents is
equivalent to the subsystem $\acap$ of reverse mathematics.
\end{abstract}

In \cite{chm}, a version of Ramsey's theorem for trees is analyzed using techniques
from computability theory and reverse mathematics.  In particular, it is shown that
for each standard integer $n \ge 3$, the usual Ramsey's theorem for $n$-tuples
is equivalent to the tree version for $n$-tuples.  The main result of this note shows
that the universally quantified versions of these forms of Ramsey's theorem are also equivalent.
Because there are so few examples of proofs involving $\acap$ in the literature,
we have included a somewhat detailed exposition of the proof.

The main subsystems of second order arithmetic used in this paper are $\rca$,
which includes a comprehension axiom for computable sets, and $\aca$, which
appends a comprehension axiom for sets definable by arithmetical formulas.
For details on the axiomatization of these subsystems, see \cite{simpson}.
More about the subsystem $\acap$ appears below.

If $\tre$ is the full binary tree of height $\omega$, we may identify each node
with a finite sequence of zeros and ones.  We refer to any subset of the nodes as a
subtree, and say that a subtree $S$ is isomorphic to $\tre$ if every node of $S$ has exactly two
immediate successors in $S$.  Formally, $S \subseteq \tre$ is isomorphic to $\tre$ if 
and only if there is a bijection $b: \tre \to S$ such that for all $\sigma, \tau \in \tre$, we have
$\sigma \subseteq \tau$ if and only if $b(\sigma ) \subseteq b ( \tau )$.  (For sequences,
$\sigma \subseteq \tau$ means $\sigma$ is an initial segment of $\tau$, and $\sigma \subset \tau$
means $\sigma$ is a proper initial segment of $\tau$.)  For any subtree $S$, we write
$[S]^n$ for the set of linearly ordered $n$-tuples of nodes in $S$.  All the nodes in
any such $n$-tuple are pairwise comparable in the tree ordering.  In \cite{chm}, the following
version of Ramsey's theorem is presented.
\begin{list}{}{}
\item  $\ctt (n)$:  Fix $k \in \nat$.  Suppose that $[\tre]^n$ is colored with $k$ colors.
Then there is a subtree $S$ isomorphic to $\tre$ such that $[S]^n$ is monochromatic.
\end{list}
In applying $\ctt (n)$, we often think of the coloring as a function $f: [\tre]^n \to k$, in which
case $S$ is monochromatic precisely when $f$ is constant on $[S]^n$.

Let $\halt{e}{t}$ denote a fixed formalization of the assertion that the Turing
machine with code number $e$, using an oracle for the set $X$, halts on input
$m$ with the entire computation bounded by $t$.  We will assume that $t$ is
a bound on all aspects of the computation, including codes for inputs from the
oracle.  This formalization can be based on Kleene's $T$-predicate or any
similar arithmetization of computation.
In $\rca$, we use the notation $Y \le_T X$ to denote the existence of two
codes $e$ and $e^\prime$ such that
\[
\forall m (m \in Y \leftrightarrow \exists t \halt{e}{t})
\]
and
\[
\forall m (m \notin Y \leftrightarrow \exists t \halt{e^\prime}{t}) .
\]
The preceding formalizes the notion that $Y$ is Turing reducible to $X$ if and
only if both $Y$ and its complement are computably enumerable in $X$.

As in \cite{dh}, we can also use this notation to formalize $\acap$.  Given any set $X$, let
$Y= X^\prime$ denote the statement
\[
\forall \langle m, e \rangle ( \langle m, e \rangle \in Y \leftrightarrow \exists t \halt {e}{t} ) ,
\]
where $\langle m,e \rangle$ denotes an integer code for the ordered pair $(m,e)$.  To formalize
the $n$th jump for $n\ge 1$, we write $Y = X^{(n)}$ if there is a finite sequence
$ X_0 , \dots , X_n$ such that $X_0 = X$, $X_n = Y$, and for
every $i<n$, $X_{i+1} = X_i^\prime$.  In this notation, $Y = X^\prime$ if and only if $Y = X^{(1)}$,
and we will often write $X^{\prime\prime}$ for $X^{(2)}$.
The subsystem $\acap$ consists of $\aca$ plus the assertion that for every $X$ and every $n$,
there is a set $Y$ such that $Y = X^{(n)}$.

Using all this terminology, we can prove a formalized version of the implication from $\ctt (n)$
to $\ctt (n+1)$, including a formalized computability theoretic upper bound.
\begin{lemma}\label{lemmaA}
$(\rca)$
Suppose $R$ is a tree isomorphic to $\tre$,
$f : [ R]^{n+1} \to k$ is a finite coloring of the $(n+1)$-tuples of comparable nodes of $R$,
and both $R \le_T A$ and $f \le_T A$.  Suppose that $A^{\prime\prime}$ exists.  Then
we can find a tree $S$ and a coloring $g: [S]^n \to k$ such that $S \le_T A^{\prime\prime}$,
$g \le_T A^{\prime\prime}$, $S$ is a subtree of $R$ isomorphic to $\tre$, and
every monochromatic subtree of $S$ for $g$ is also monochromatic for $f$.
\end{lemma}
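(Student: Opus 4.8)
The plan is to reduce the exponent by fixing a uniform "base point" in the tree and using Ramsey-type reasoning relative to $A''$. The standard computability-theoretic proof of the step from Ramsey for $(n+1)$-tuples to Ramsey for $n$-tuples colors an $n$-tuple $\sigma_1 \subset \cdots \subset \sigma_n$ by the limiting color of $f$ applied to that $n$-tuple together with a node $\tau \supset \sigma_n$ extending far up the tree. For trees we need this limiting behavior to stabilize along extensions within the tree $R$, so the first task is to make precise what "the eventual color" means and to verify it is computed from $A''$.

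**First I would** work relative to the given tree $R$, which is isomorphic to $\tre$ via some bijection $b \le_T A$ (obtainable since $R \le_T A$). For an $n$-tuple $\vec\sigma = (\sigma_1, \dots, \sigma_n) \in [R]^n$, consider the colors $f(\vec\sigma \cat \tau)$ as $\tau$ ranges over nodes of $R$ properly extending $\sigma_n$. The key pigeonhole step is to argue, using $\ctt$-style stabilization, that there is a color $c$ and a subtree on which $f(\vec\sigma \cat \tau) = c$ for cofinally many (in fact a full isomorphic subtree of) extensions $\tau$; I would define $g(\vec\sigma)$ to be this eventual color. To locate the color with an $A''$ oracle, observe that "color $c$ occurs for arbitrarily high extensions $\tau$ in $R$" is a $\Pi^0_2$ (hence $A''$-computable) predicate in $\vec\sigma$ and $c$, so $g \le_T A''$ follows once the relevant quantifier complexity is checked. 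Simultaneously I would thin $R$ to a subtree $S \le_T A''$ isomorphic to $\tre$ on which these eventual colors are actually attained by the chosen extensions, so that monochromaticity of $S$ for $g$ genuinely transfers to monochromaticity for $f$.

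**The correctness argument** then runs as follows: if $T \subseteq S$ is isomorphic to $\tre$ and monochromatic for $g$ with color $c$, I want to produce a monochromatic subtree for $f$. Given an $(n+1)$-tuple in $T$, its first $n$ coordinates form an $n$-tuple with $g$-value $c$, and by construction the top coordinate can be taken to witness the eventual color $c$, so $f$ also takes value $c$; care is needed to ensure that every $(n+1)$-tuple arising in a suitable monochromatic-for-$f$ subtree of $T$ has this form, which is exactly where the thinning in the previous step earns its keep. This is the part I expect to require the most bookkeeping, since I must align the witnesses used to define $g(\vec\sigma)$ with the branching structure of $S$.

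**The hard part will be** carrying out the construction of $S$ and verifying the $A''$ bound inside $\rca$ rather than in ordinary computability theory. Because $\rca$ lacks $\Sigma^0_2$ induction in general, I cannot freely iterate pigeonhole arguments or assume that stabilization happens; instead I must express $S$ and $g$ by explicit $A''$-computable definitions (via the $\Sigma^0_2$/$\Pi^0_2$ characterizations of the jump) and prove isomorphism-to-$\tre$ and the color-transfer property using only the comprehension and induction available. The main obstacle, then, is the combinatorial bookkeeping to define the single subtree $S$ together with its embedding so that every node has two immediate successors in $S$ and so that the eventual-color witnesses are built into the branching — all while keeping the defining formulas within the complexity that $A''$ decides.
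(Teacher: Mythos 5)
Your overall strategy --- reduce the exponent by attaching a witness node above each $n$-tuple and coloring the $n$-tuple by the value $f$ takes there --- is the same as the paper's, but two load-bearing ideas are missing from the sketch, and without them the proof does not close. First, the coherence problem you defer to ``bookkeeping'' is the actual content of the argument, and treating each $n$-tuple $\vec\sigma$ separately cannot work: witnesses chosen for distinct $n$-tuples ending at the same node must agree, and a thinning that secures the ``eventual color'' for one $n$-tuple can destroy the witnesses for another. The paper resolves this by building $S$ level by level and, at each new node $\rho_\sigma$, coloring single extensions $\tau$ by an integer code for the entire set $\{ (\vec m , f(\vec m , \tau )) \mid \vec m \in [P]^n \}$, where $P$ is the chain constructed so far; thinning to a subtree monochromatic for this \emph{product} coloring handles all $n$-tuples from the chain simultaneously, so any two later nodes of $S$ above $\rho_\sigma$ give the same $f$-value on every such $n$-tuple. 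The coloring $g$ is then simply $f$ evaluated at the immediate successor in $S$, and no separate alignment step is needed.

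Second, the complexity accounting does not go through as stated. The predicate you propose, ``color $c$ occurs at arbitrarily high extensions of $\sigma_n$,'' is indeed $\Pi^0_2$, but it is the wrong predicate: a color can occur cofinally (for instance, only along a single branch) without admitting any subtree isomorphic to $\tre$ in that color. What you actually need is ``there is a $\rho$ such that every node above $\rho$ has an extension colored $c$,'' which is prima facie $\Sigma^0_3$ relative to $A$ and hence not decidable from $A^{\prime\prime}$. The paper circumvents this by taking $c_\sigma$ to be the \emph{greatest} color for which $\exists \rho\, \forall \tau \supseteq \rho\, (f_{\rho_\sigma}(\tau) \ge c_\sigma)$ --- a $\Sigma^0_2$ condition relative to $A$, so $A^{\prime\prime}$ suffices --- and observing that maximality forces the color class $c_\sigma$ to be dense above $\rho$, which is exactly what yields the isomorphic monochromatic subtree. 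Relatedly, there is no ``limiting color'' on a tree: nothing stabilizes along extensions and different branches can realize different colors, so ``the eventual color'' is not well defined until after the thinning has been performed. Without the product coloring and the maximal-color device, neither the bound $S, g \le_T A^{\prime\prime}$ nor the transfer of monochromaticity from $g$ to $f$ can be verified.
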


\begin{proof}
Working in $\rca$, suppose $R$, $f$, and $A$ are as in the statement of the lemma.
We will essentially carry out the proof of Theorem 1.4 of \cite{chm}, replacing uses of
arithmetical comprehension by recursive comprehension relative to $A^{\prime\prime}$.
Toward this end, given a sequence $P = \{ \rho _ \tau \mid \tau \subseteq \sigma\}$
of comparable nodes of $R$ such that the sequence terminates in $\rho_\sigma$,
define an induced coloring of single nodes $\tau \supset \rho_\sigma$ by setting
\[
f_{\rho_\sigma} (\tau ) = \langle \{ ( \vec m , f (\vec m , \tau ) ) \mid \vec m \in [P]^n \} \rangle
\]
where the angle brackets denote an integer code for the finite set.
Since $f \le_T A$, for any finite set $P$ we have $f_{\rho_\sigma } \le_T A$.

For each $\sigma \in \tre$, define $p_\sigma$, $T_\sigma$, and $c_\sigma$ as follows.
Let $\rho_{\langle\rangle}$ be the root of $R$ and $T_{\langle\rangle} = R$.  Given $\rho_\sigma$
and $T_\sigma$ computable from $A$, use $A^{\prime\prime}$ to compute a $c_\sigma$
which is the greatest integer in the range of $f_{\rho_\sigma}$ such that
\[
\exists \rho \in T_\sigma ( \rho \supset \rho_\sigma \land \forall \tau \in T_\sigma
(\tau \supset \rho \to c_\sigma \le f_{\rho_\sigma} (\tau ))) .
\]
Using $A^{\prime\prime}$, compute the least such $\rho$.  Let $T$ denote the subtree of $T_\sigma$
isomorphic to $\tre$ defined by taking $\rho$ as the root and letting the immediate
successors of each node be the least pair of incomparable extensions in $T_\sigma$ that are
assigned $c_\sigma$ by $f_{\rho_\sigma}$.  Because of the choice of $c_\sigma$, $T$ is isomorphic
to $\tre$, and its nodes can be located in an effective manner.
(In \cite{chm}, this $T$ is called the standard $c_\sigma$-colored
subtree of $T_\sigma$ for $\rho$ using $f_{\rho_\sigma}$.)  Let $\rho_{\sigma\cat 0}$ and
$\rho_{\sigma\cat 1}$ be the two level one elements of $T$ and let $T_{\sigma\cat \varepsilon}$
be the subtree of $T$ with root $\rho_{\sigma\cat \varepsilon}$ for each $\varepsilon \in \{0,1\}$.
Note that given any finite chain of elements and colors
$\{\langle \rho_\tau , c_\tau \rangle \mid \tau \subseteq \sigma \}$, sufficiently large initial
segments of each $T_\tau$ can be computed to determine $\rho_{\sigma\cat 0}$,
$\rho_{\sigma\cat 1}$, $c_{\sigma\cat 0}$, and $c_{\sigma\cat 1}$, using only
$A ^{\prime\prime}$.  Consequently, the subtree $S = \{ \rho_\sigma \mid \sigma \in \tre \}$ is computable from
$A^{\prime\prime}$.

Define $g:[S]^n \to k$ by
$g(p_{\sigma_1} , \dots , \rho_{\sigma_n} ) = f( \rho_{\sigma _1} , \dots , \rho_{\sigma_n } , \rho_{\sigma_n \cat 0} )$.
Since $S \le_T A^{\prime\prime}$, we also have $g \le _T A ^{\prime\prime}$.  By the construction of $S$,
given any increasing sequence of elements of $S$ of the form
$\rho _1 \subset \rho_2 \subset \dots \subset \rho_n $,
and extensions $\rho_n \subset \rho_{n+1}$ and $\rho_n \subset \rho_{n+2}$,
we have $f_{\rho_n} (\rho_{n+1} ) = f_{\rho_n} (\rho_{n+2} )$, so
$f(\rho_1 , \dots , \rho_n , \rho_{n+1} ) = f( \rho_1 , \dots , \rho_n , \rho_{n+2} )$.
Thus any monochromatic subtree for $g$ is also monochromatic for $f$, and the proof is complete.
\end{proof}

Extracting the 
computability theoretic content of the previous argument, given
a computable coloring of $n$-tuples we can find a monochromatic
set computable from $0^{(2n-2)}$.  This is not an optimal bound,
since applying the Strong Hierarchy Theorem to Theorem 2.7 of \cite{chm}
yields a monochromatic set computable from $0^{(n)}$.  However, the
preceding result does enable us to complete the proof of the next theorem,
and avoids formalization of the long proof of Theorem 2.7 of \cite{chm}.

\begin{thm}\label{theoremB}
$(\rca )$  The following are equivalent:
\begin{list}{}{}
\item [$(1)$]  $\acap$
\item [$(2)$]  $\forall n \ctt (n)$
\end{list}
\end{thm}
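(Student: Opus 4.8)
The plan is to establish both implications, using Lemma~\ref{lemmaA} for $(1)\Rightarrow(2)$ and a jump-coding argument for $(2)\Rightarrow(1)$. I will take $(2)\Rightarrow(1)$ first. Assuming $\forall n\,\ctt(n)$, the instance $\ctt(3)$ already delivers $\aca$: by \cite{chm} it is equivalent over $\rca$ to the usual Ramsey theorem for triples, which is in turn equivalent to $\aca$. Working now in $\rca$ together with $\forall n\,\ctt(n)$ and the derived $\aca$, it remains to produce $X^{(n)}$ for an arbitrary set $X$ and number $n$. I would fix $X$ and $n$ and construct an $X$-computable coloring of $[\tre]^{m}$, for a suitable exponent $m$ depending on $n$ (for instance $m=n+2$), with the property that every monochromatic subtree $S$ for this coloring computes $X^{(n)}$. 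This is the tree analogue of Jockusch's coding of iterated jumps into colorings of tuples, obtained by stacking the single-jump lower bound already present in \cite{chm} so that raising the exponent encodes one further application of the jump. Applying $\ctt(m)$ yields such an $S$, and since $X^{(n)}\le_T S\oplus X$, comprehension produces $X^{(n)}$. As $X$ and $n$ were arbitrary, $\acap$ follows.

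For $(1)\Rightarrow(2)$, assume $\acap$ and fix $n$, $k$, and a coloring $f:[\tre]^{n}\to k$. The idea is to drive the exponent down to $1$ by iterating Lemma~\ref{lemmaA}, absorbing the cost of each step into a double jump supplied by $\acap$. Writing $A_0=f$, one application of Lemma~\ref{lemmaA} (with $R=\tre$) gives a subtree isomorphic to $\tre$ and an induced coloring of $(n-1)$-tuples computable from $A_0^{\prime\prime}$; a second application gives a coloring of $(n-2)$-tuples computable from $A_0^{(4)}$; and after $n-1$ steps one reaches a coloring $g$ of single nodes on a subtree isomorphic to $\tre$, with $g$ computable from $A_0^{(2n-2)}$. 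By the final clause of Lemma~\ref{lemmaA}, a subtree monochromatic for the coloring produced at any stage is monochromatic for the coloring of the previous stage, so a monochromatic subtree for $g$ is monochromatic for $f$.

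The base case $\ctt(1)$ is provable in $\aca$. Given a coloring with $k$ colors of the single nodes of a tree isomorphic to $\tre$, I argue by recursion on the number of colors. If the first color branches above every node, in the sense that above each node there are two incomparable nodes of that color, then a monochromatic subtree isomorphic to $\tre$ of that color can be grown by direct search. Otherwise some node lies above only a chain of nodes of the first color; one then passes to a subtree isomorphic to $\tre$ disjoint from that chain, on which the first color is absent, and recurses on the remaining colors. The searches and case distinctions are arithmetical in the coloring, so $\aca$ suffices both to carry out the argument and to produce the resulting subtree.

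The step I expect to be the main obstacle is not the combinatorics but the \emph{formalization} of the iteration in $(1)\Rightarrow(2)$. The statement $\ctt(n)$ is $\Pi^1_2$, so there is no induction on $n$ available in $\rca$ or $\aca$ with which to justify ``$n-1$ applications'' of Lemma~\ref{lemmaA}. Instead I would invoke $\acap$ once to fix, for the given $A_0$ and $n$, a single finite sequence of jumps $A_0,A_0^\prime,\dots,A_0^{(2n-2)}$, and then exploit the fact that the construction in the proof of Lemma~\ref{lemmaA} is uniform in its oracle. This uniformity allows the entire sequence of subtrees and induced colorings $f=f_n,f_{n-1},\dots,f_1$ to be defined by a single instance of arithmetical comprehension relative to $A_0^{(2n-2)}$, replacing the unavailable external induction on the $\Pi^1_2$ formula $\ctt(n)$. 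Solving the base case for $f_1$ then yields a subtree monochromatic for $f_1$, which unwinds through the sequence to a subtree monochromatic for $f$; the essential role of $\acap$ beyond $\aca$ is precisely to furnish the length-$(2n-2)$ tower of jumps that absorbs the universal quantifier on $n$.
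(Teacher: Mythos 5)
Your $(1)\Rightarrow(2)$ argument is essentially the paper's: the same tower of $2n-2$ jumps obtained from one application of $\acap$, the same iteration of Lemma~\ref{lemmaA} driving the exponent down to $1$, and the same (correct) observation that since $\ctt(n)$ is $\Pi^1_2$ the iteration cannot be justified by external induction on $n$ but must instead be carried out as arithmetical induction/comprehension on a sequence of indices relative to $A_0^{(2n-2)}$. One caveat on your base case: the paper settles $\ctt(1)$ by citing Theorem 1.2 of \cite{chm} together with the fact that $\acap$ proves $\Sigma^0_2$ induction, whereas your ``recursion on the number of colors,'' as literally stated, is an induction on the $\Pi^1_2$ statement ``every $j$-coloring of a subtree isomorphic to $\tre$ has a monochromatic subtree isomorphic to $\tre$'' --- exactly the kind of induction you correctly rejected for the exponent. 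It can be repaired by recasting it as arithmetical induction about the single given coloring (e.g., on the set of colors that branch above a given node), but as written it repeats the error it is meant to avoid.

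The genuine gap is in $(2)\Rightarrow(1)$. The entire mathematical content of that direction, on your route, is the claim that for every $X$ and $n$ there is an $X$-computable coloring of $[\tre]^{m}$, \emph{verifiably in $\rca$}, every monochromatic subtree of which computes $X^{(n)}$; you assert this as ``the tree analogue of Jockusch's coding'' but give no construction and no indication of how $\rca$ proves that the set computed from $S\oplus X$ actually satisfies the defining recursion of a jump hierarchy --- which is precisely the delicate part of such reversals. The paper avoids building anything on trees: given $f:[\nat]^n\to k$ it sets $g(\sigma_1,\dots,\sigma_n)=f(\lh(\sigma_1),\dots,\lh(\sigma_n))$, notes that a monochromatic subtree for $g$ contains a path coding a monochromatic set for $f$, concludes $\forall n\,\ctt(n)\to\forall n\,{\sf{RT}}(n)$, and then cites the known reversal of $\forall n\,{\sf{RT}}(n)$ to $\acap$ (Theorem 8.4 of \cite{mileti} or Proposition 4.4 of \cite{dh}). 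To close your argument you should either make this reduction to ordinary Ramsey's theorem and invoke (or reprove) that reversal, or actually exhibit your tree coloring and carry out its verification in $\rca$; as it stands the reversal is not proved. (Your preliminary derivation of $\aca$ from $\ctt(3)$ is correct but does not substitute for this step.)
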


\begin{proof}
To prove that (1) implies (2), assume $\acap$ and let $f: [\tre ]^n \to k$ be a coloring.
By $\acap$, the jump $f^{(2n-2)}$ exists, so by discarding the odd jumps we can
find a sequence of sets $X_0 , X_1 , \dots , X_{n-1}$ such that $X_0 = f$ and for each $i$,
$X_{i+1} = X_i^{\prime\prime}$.
Note that $f \le_T X_0$ and $\tre \le_T X_0$.
By Lemma \ref{lemmaA}, for any $X_i$, given indices
witnessing that a subtree isomorphic to $\tre$ and a coloring of the $(n-i)$-tuples
of that subtree are each computable from $X_i$, we can find indices for computing
an infinite subtree and a coloring of $(n-i-1)$-tuples from $X_{i+1}$ satisfying the
conclusion of Lemma~\ref{lemmaA}.  Thus, by
induction on arithmetical formulas (which is a consequence of $\acap$),
we can prove the existence of a sequence of indices, the last of which
can be used to compute a subtree $T_{n-1}$ and a function $f_{n-1}:[T_{n-1}] ^1 \to k$ such
that
$T_{n-1}$ is isomorphic to $\tre$ and
any monochromatic
subtree for $f_{n-1}$ is also monochromatic for $f$.  Since $\acap$ includes $\rca$ plus
induction for $\Sigma^0_2$ formulas, by Theorem 1.2 of \cite{chm}, $T_{n-1}$ contains
a subtree which is monochromatic for $f_{n-1}$ and isomorphic to $\tre$.  This subtree
is also monochromatic for $f$, so $\ctt (n)$ holds for $f$.

To prove that (2) implies (1), assume $\rca$ and (2).  Given any coloring of $n$-tuples of integers,
$f:[\nat ]^n \to k$, we may define a coloring $g: [ 2^{<\nat } ] ^n \to k$ on $n$-tuples of elements
of $\tre$ by
\[
g(\sigma_1 , \dots , \sigma_n ) = f( \lh (\sigma_1 ) , \dots \lh (\sigma_n))
\]
where $\lh (\sigma)$ denotes the length of the sequence $\sigma$.
Any monochromatic tree for $g$ contains an infinite path which encodes an infinite
monochromatic set for $f$.  Thus, as noted in the proof of Theorem 1.5 of \cite{chm},
$\forall n \ctt (n)$ implies the usual full Ramsey's theorem, denoted by $\forall n {\sf{RT}}(n)$.
$\acap$ can be deduced from $\forall n {\sf{RT}} (n)$ by Theorem 8.4 of
\cite{mileti}, or by applying Proposition 4.4 of \cite{dh}.
\end{proof}

A typical proof of $\forall n \ctt (n)$ would proceed by induction on $n$ and require the use of induction
on $\Pi^1_2$ formulas.  In the preceding argument, the existence of the $n$th jump is used to push
the application of induction down to arithmetical formulas.  The proof of Theorem \ref{theoremB} together
with Proposition 4.4 of \cite{dh} provide a detailed exposition of a proof and reversal in $\acap$ and
 show that the full versions of the usual Ramsey's theorem, the polarized version of
Ramsey's theorem, and Ramsey's theorem for trees are all equivalent to $\acap$ over $\rca$.

\begin{bibsection}[Bibliography]
\begin{biblist}

\bib{chm}{article}{
    author={Chubb, Jennifer},
    author={Hirst, Jeffry},
    author={McNicholl, Timothy},
    title={Reverse mathematics, computability, and partitions of trees},
    journal={J. Symbolic Logic},    volume={74},
    number={1},
    date={2009},
    pages={201--215},
    }

\bib{dh}{article}{
    author={Dzhafarov, Damir},
    author={Hirst, Jeffry}
     title={The polarized Ramsey's theorem},
     journal={Arch. Math. Logic},
     doi={10.1007/s00153-008-0108-0},
}

 \bib{mileti}{article}{
   author={Mileti, Joseph R.},
   title={The canonical Ramsey theorem and computability theory},
   journal={Trans. Amer. Math. Soc.},
   volume={360},
   date={2008},
   number={3},
   pages={1309--1340 (electronic)},
}
 
 \bib{simpson}{book}{
   author={Simpson, Stephen G.},
   title={Subsystems of second order arithmetic},
   series={Perspectives in Mathematical Logic},
   publisher={Springer-Verlag},
   place={Berlin},
   date={1999},
   pages={xiv+445},
   isbn={3-540-64882-8},
}

\end{biblist}
\end{bibsection}

\end{document}